\documentclass[3p]{elsarticle}
\usepackage{mathtools}

\usepackage{amssymb}
\usepackage[english,francais]{babel}
\usepackage{amsthm}
\usepackage{amsmath}
\usepackage{amsfonts}
\usepackage{color}
\usepackage{pgfplots}
\usepackage{dsfont}

\usepackage{enumitem}
\usepackage[]{fullpage}

\newcommand{\norm}[1]{\ensuremath{\left\lVert#1\right\rVert}}
\newcommand{\abs}[1]{\ensuremath{\left\lvert#1\right\rvert}}
\DeclareMathOperator{\R}{\mathbb{R}}
\newcommand{\set}[1]{\ensuremath{\left\lbrace#1\right\rbrace}}

\babelhyphenation[english]{va-lues}
\babelhyphenation[english]{re-pre-sen-ta-tion}
\newtheorem{theorem}{Theorem}
\newtheorem{corollary}{Corollary}
\newtheorem{definition}{Definition}
\journal{}
\date{}

\makeatletter
\def\ps@pprintTitle{%
	\let\@oddhead\@empty
	\let\@evenhead\@empty
	\def\@oddfoot{}%
	\let\@evenfoot\@oddfoot}
\makeatother

\begin{document}
\begin{frontmatter}
\title{Exponential decay of the resonance error in numerical homogenization via parabolic and elliptic cell problems}
\author[1]{Assyr Abdulle}
\ead{assyr.abdulle@epfl.ch}
\author[1]{Doghonay Arjmand}
\ead{doghonay.arjmand@epfl.ch}
\author[1]{Edoardo Paganoni}
\ead{edoardo.paganoni@epfl.ch}
\address[1]{ANMC, \'Ecole Polytechnique F\'ed\'erale de Lausanne, 1015 Lausanne, Switzerland}

\begin{abstract}
\selectlanguage{english}
This paper presents two new approaches for finding the homogenized coefficients of multiscale elliptic PDEs. 
Standard approaches for computing the homogenized coefficients suffer from the so-called resonance error, originating from a mismatch between the true and the computational boundary conditions. Our new methods, based on solutions of 
parabolic and elliptic cell-problems, result in an exponential decay of the resonance error.  
\vskip 0.5\baselineskip

\selectlanguage{francais}
\noindent{\bf R\'esum\'e} \vskip 0.5\baselineskip \noindent
{\bf D\'ecroissance exponentielle de l'erreur de r\'esonance en homog\'en\'eisation numerique via des probl\`emes de cellules paraboliques et elliptiques.}
Cette note pr\'esent deux nouvelles approches pour trouver les coefficients homog\'en\'eis\'es des EDP elliptiques multi-\'echelles.
Les approches standard pour calculer les coefficients homog\'en\'eis\'es souffrent de ce que l'on appelle l'erreur de r\'esonance, qui d\'ecoule d'une inad\'equation entre les vraies conditions aux limites et celles computationelles. 
Nos nouvelles m\'ethodes, bas\'ees sur des solutions des probl\`emes de cellules paraboliques et elliptiques, entra\^inent une d\'ecroissance exponentielle de l'erreur de r\'esonance.

\end{abstract}
\end{frontmatter}
\selectlanguage{english}
\section{Introduction}
\label{sec:intro}
We consider the numerical homogenization of multiscale elliptic partial differential equations (PDEs) of the form 
\begin{equation}
\label{eq:model problem}
\left\lbrace
\begin{aligned}
-\nabla \cdot \left( a^{\varepsilon}(x) \nabla u^{\varepsilon} \right) &= f & \quad  & \text{in }\Omega \subset \mathbb{R}^{d} \\
u^{\varepsilon} &= 0 & \quad & \text{on } \partial \Omega,
\end{aligned}
\right.
\end{equation}
where $a^{\varepsilon} \in \left[ L^{\infty} (\Omega) \right]^{d \times d}$ is symmetric, uniformly elliptic and bounded, and ${\varepsilon \ll |\Omega|^{1/d} = O(1)}$ is the wave-length of the small scale variations in the medium. A direct numerical approximation of $u^{\varepsilon}$ by standard finite element/difference methods is prohibitively expensive as the $\varepsilon$-scale variations need to be resolved on the whole computational domain $\Omega$. Homogenization theory aims at finding an effective coefficient $a^{0}$ (or solution $u^{0}$) such that $-\nabla \cdot \left( a^{0}(x) \nabla u^{0} \right) = f$ describes the coarse-scale behaviour of \eqref{eq:model problem}. The coefficient $a^{0}$ (and hence the solution) is no more oscillatory, and a standard solver may be directly applied to the homogenized system once $a^{0}$ is determined. Explicit representations for $a^{0}$ are available only in a few cases, such as periodic microstructures or stationary ergodic random materials. For example, when the medium is such that $a^{\varepsilon}(x) = a(x/\varepsilon)$, and $a$ is a ${K:=[-1/2,1/2]^{d}}$-periodic function, then $a^0$ is given by  
\begin{equation}\label{eq: upscaling formula}
a^0_{ij} =  \frac{1}{\abs{K}} \int_{K}\left(\mathbf{e}_i + \nabla \chi^i(y)\right) \cdot a(y) \left( \mathbf{e}_j + \nabla \chi^j (y) \right) \,dy,
\end{equation}
where $\chi^i$ is a $K$-periodic solution of the so-called \emph{corrector problem}, see \cite{BLP78,CiD99,JKO94}:
\begin{equation}
\label{eq:micro_problem_periodic_domain}
-\nabla\cdot\Big( a(y) \left( \nabla\chi^i + \mathbf{e}_i \right) \Big) = 0 \quad  \text{in } K.
\end{equation}
In several situations of interest, e.g. when the period of $a$ is not known or when  $a$ is quasi-periodic or stochastic, equations \eqref{eq: upscaling formula} and \eqref{eq:micro_problem_periodic_domain} have to be posed over the whole $\mathbb{R}^d$.
In this case, we write ${a^{0}= \lim_{R \to +\infty} a^{0,R}}$, where 
\begin{equation}\label{eq:upscaling_formula_bounded_domain}
a^{0,R}_{ij} = \frac{1}{\abs{K_R}} \int_{K_R}\big(\mathbf{e}_i + \nabla \psi^i_{R}(y)\big) \cdot a(y) \big( \mathbf{e}_j + \nabla \psi^j_{R}(y)\big) \,dy,
\end{equation}
the domain $K_{R}:=\left[-\frac{R}{2},\frac{R}{2}\right]^{d}$, and $\psi^i_{R}$ solves the Dirichlet problem 
\begin{equation}
\label{eq:micro_problem_bounded_domain}
\left\lbrace
\begin{aligned}
-\nabla\cdot\Big( a(y) \left( \nabla\psi^i_{R} + \mathbf{e}_i \right)\Big) &= 0 & \quad  & \text{in } K_R \\
\psi^i_{R} &= 0 & \quad & \text{on } \partial K_R.
\end{aligned}
\right.
\end{equation}
In practice, the value of $a^{0,R}$ can be computed only for finite values of $R$, and an error occurs due to the mismatch on the boundary $\partial K_R$ between the values of $\psi^i_{R}$ and $\chi^i$. This error will then propagate into the domain $K_R$ and deteriorate the accuracy of the approximation $a^{0,R}$. 
It is well-known that if $a$ is $K$-periodic and $R$ is not integer, then ${\norm{  a^{0,R} - a^0 }_{F} \le C R^{-1}}$, see \cite{AEE12,EMZ05}. A similar result exists also for stationary ergodic random coefficients, both in continuous \cite{BoP04} and discrete \cite{GlO12} settings. This first order resonance error dominates all other discretization errors in modern multiscale methods and, therefore, better approximation techniques with reduced resonance errors are needed. 

In order to reduce the resonance error, previous approaches improved the prefactor (but not the convergence rate) \cite{YuE07}, or gave second order rates in $1/R$ \cite{BlL10}, or fourth order in the asymptotic limit for large values of $R$ \cite{Glo11}. Another strategy results in arbitrary orders in $1/R$, but at the cost of solving a computationally expensive wave equation \cite{AR16}. 

This paper, inspired by \cite{Mou18}, presents two strategies based on parabolic and elliptic corrector problems which have exponentially decaying boundary errors at a cost comparable to the one of solving the classical elliptic model.
%
%
	
\section{New algorithms for computing the homogenized tensor}
\label{sec:parabolic model}
It can be seen that the computation of homogenized coefficients is linked to the average of oscillatory functions, as formula \eqref{eq:upscaling_formula_bounded_domain} shows. A naive averaging of a $K$-periodic function $f$ over $K_R$ converges to the mean value ${\frac{1}{|K|}\int_{K} f(y) \; dy}$ with a first order accuracy in $1/R$. To improve this accuracy to arbitrarily high rates, a set of smooth averaging filters can be used, see \cite{Glo11}. 
%
%
%
\begin{definition} \label{def:Filters}
	We say that a function $\mu:[-1/2,1/2]\mapsto\R_+$ belongs to the space $\mathbb{F}_{q}$, $q\ge 1$, if:  
	\begin{enumerate}[label=\roman*)]
		\item $\displaystyle \mu\in C^q([-1/2,1/2]) \cap W^{q+1,\infty} ((-1/2,1/2))$;
		\item $\displaystyle \mu^{(k)}(-1/2) = \mu^{(k)}(1/2) = 0, \, \forall k \in\left\lbrace0,\dots,q-1\right\rbrace$;
		\item $\displaystyle \int_{-\frac{1}{2}}^{\frac{1}{2}} \mu\left(y\right) dy = 1$.
	\end{enumerate}
	For $q=0$, we define $\mu\in\mathbb{F}_0$ as ${\mu(y) = \mathds{1}_{[-1/2,1/2]}}$, where $\mathds{1}_{I}$ is the characteristic function on the interval $I$.
	
	We say that a function ${\mu_L: K_L:=[-L/2,L/2]^d\subset \mathbb{R}^d \to \mathbb{R}_{+}}$, with $L>0$, belongs to the space $\mathbb{F}_{q}(K_L)$ if $\mu_{L} (y) = \frac{1}{L^d} \prod_{i=1}^{d} \mu \left( \frac{y_i}{L} \right)$, where $\mu \in \mathbb{F}_{q}$ and $y_i$ is the coordinate along the $i$-th direction.
\end{definition}
	\subsection{A parabolic approach}
	\label{sec:parabolic approach}
In this section we introduce a numerical homogenization scheme based on the solution of parabolic differential equations, as proposed in the discrete setting \cite{Mou18}, since the parabolic Green's function decays exponentially in space. This yields to a reduced influence of mismatching boundary values on the corrector functions.
%
The new cell problems are defined by
\begin{equation} \label{eq:parabolic dirichlet problem}
	\left\lbrace
	\begin{aligned}
	& \frac{\partial u^i_{R}}{\partial t} - \nabla\cdot(a(y)\nabla u^i_{R}) = 0 & \quad &\text{in } K_{R}\times (0,T] \\
	& u^i_{R} = 0 &  \quad &\text{on }  \partial K_{R} \times (0,T]\\
	& u^i_{R}(y,0) = \nabla \cdot (a(y)\mathbf{e}_i) & \quad &\text{in } K_{R}.
	\end{aligned}
	\right. 
\end{equation}
Then, the homogenized coefficient is approximated by
	\begin{equation}
	\label{eq:new a0 filtered}
	a^{0,R,L,T}_{ij}  := \int_{K_L} \mathbf{e}_i \cdot a(y) \mathbf{e}_j \mu_{L}(y) \,dy - 2 \int_{0}^{T} \int_{ K_{L}}u^i_{R}(y,t) u^j_{R}(y,t) \mu_{L}(y) dy\,dt.
	\end{equation}
As it will be shown in Section \ref{sec:main}, the $T$ parameter is crucial to obtain an exponential convergence of the resonance error.
	\subsection{A modified elliptic approach}
	\label{sec:modified elliptic approach}
	The second approach that we propose can be viewed as adding a correction term to the elliptic cell problem \eqref{eq:micro_problem_bounded_domain} to reduce the boundary effect in the interior region $K_L$. The new elliptic cell problems are given by
\begin{equation}
\label{eq:new_elliptic_micro_problem}
\left\lbrace
\begin{aligned}
-\nabla\cdot\Big( a(y)\left( \nabla \chi^i_{R,T,N} + \mathbf{e}_i \right)\Big) + [e^{-A_N T} g^{i}](y) &= 0 & \quad  & \text{in } K_R \\
\chi^i_{R,T,N} &= 0 & \quad & \text{on } \partial K_R ,
\end{aligned}
\right.
\end{equation} 
where
\begin{align*}
[e^{-A_N T} g^{i}](y)  := \sum_{k=1}^{N} e^{-\lambda_k T} g_k^{j} \varphi_k(y), 
\end{align*}
and $\{ \lambda_k, \varphi_k \}_{k=1}^{N}$ are the first $N$ dominant eigenvalues and eigenfunctions of the operator $A:=-\nabla \cdot \left(a(\cdot) \nabla  \right)$, equipped with Dirichlet boundary conditions. Moreover, $g^i(y):= \nabla \cdot a(y)\mathbf{e}_i$, and $g^{i}_k := \langle g^{i}, \varphi_k \rangle_{L^2(K_R)}$. The homogenized coefficient can then be approximated by 
\begin{equation}\label{eq:new_elliptic_upscaling_formula}
b^{0,R,L,T,N}_{ij} = \int_{K_L} \left( a_{ij}(y) + \sum_{k=1}^{d} a_{ik}(y) \partial_{k} \chi^j_{R,T,N} (y) \right) \mu_{L}(y) \; dy. 
\end{equation} 
It is worth mentioning that the correction term $[e^{-A_N T} g^{i}](y)$ is an approximation to $[e^{-A T} g^{i}](y)$, which corresponds to the solution of the parabolic PDE \eqref{eq:parabolic dirichlet problem} at time $T$. However, due to the exponential decay of the semigroup $e^{-A T}$ with respect to the eigenvalues of the operator $A$, one can approximate this correction term with exponential accuracy by computing a few dominant eigenmodes of $A$, instead of solving the full parabolic PDE \eqref{eq:parabolic dirichlet problem}.

\section{Main results}
	\label{sec:main}
	\subsection{Equivalence between the standard elliptic and the parabolic formulations}
	\label{sec:equivalence}
	In this section, we give a proof of the equivalence between elliptic and parabolic equations, thus legitimating the use of \eqref{eq:new a0 filtered} and \eqref{eq:new_elliptic_upscaling_formula}, in place of \eqref{eq:upscaling_formula_bounded_domain}, as upscaling model.
In the statement of the results, we will refer to the space $\mathcal{M}(\alpha,\beta,\Omega)$, which consists of symmetric matrices $a \in [L^{\infty}(\Omega)]^{d\times d}$ such that ${\alpha |\zeta|^2 \le \zeta \cdot a(y) \zeta \le \beta |\zeta|^2}$, ${\forall \zeta \in \mathbb{R}^d}$, ${\text{ a.e. } y \in \Omega\subset \mathbb{R}^d}$. We will also use the notation 
\begin{equation*}
X_0(\R_+,\Omega) := \set{ v \in L^2 \left( \R_+ ; H^1_0 (\Omega) \right) , \partial_t v \in L^2 \left( \R_+ ; H^{-1} (\Omega)\right) }.
\end{equation*} 
	\begin{theorem}\label{thm:weak form convergence}
		Let $a \in \mathcal{M}(\alpha,\beta,K_R)$ and let $\nabla\cdot \left(a\mathbf{e}_k\right) \in L^2(K_R)$, for $k =1,\dots,d$. Let $u_R^k\in X_0(\R_+,K_R)$ be the unique weak solution of \eqref{eq:parabolic dirichlet problem} and $\psi^k_{R} \in H^1_0( K_{R} )$ be the unique weak solution of \eqref{eq:micro_problem_bounded_domain}. Then, for $1\le j,k \le d$ the following identities hold
		
			\begin{equation}
			\label{eq:result1 proposition}
			\psi^k_{R}(y) = \int_{0}^{+\infty} u^k_R(y,t) \, dt ,
			\end{equation}
			\begin{equation}
			\label{eq:result2 proposition}
			\frac{1}{2} \int_{K_{R}} \nabla \psi^k_{R}(y) \cdot a(y) \nabla \psi^j_{R}(y) \;  dy = \int_{0}^{+\infty} \int_{ K_{R}}u^k_{R}(y,t) u^j_{R}(y,t) \; dy\;dt. 
			\end{equation}
	\end{theorem}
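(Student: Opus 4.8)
The plan is to recast both problems through the self-adjoint elliptic operator $A := -\nabla\cdot(a(\cdot)\nabla)$ on $L^2(K_R)$ with homogeneous Dirichlet boundary conditions. Writing $g^k := \nabla\cdot(a\mathbf{e}_k)\in L^2(K_R)$, the elliptic corrector solves $A\psi^k_R = g^k$ in the weak sense, while the parabolic corrector solves $\partial_t u^k_R + A u^k_R = 0$ with initial datum $u^k_R(\cdot,0)=g^k$. Since $a\in\mathcal{M}(\alpha,\beta,K_R)$ and $K_R$ is bounded, $A$ is positive with compact resolvent and its lowest eigenvalue $\lambda_1>0$ is bounded below via the Poincar\'e inequality, which yields the energy estimate $\norm{u^k_R(\cdot,t)}_{L^2(K_R)} \le e^{-\lambda_1 t}\norm{g^k}_{L^2(K_R)}$. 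This exponential decay is what renders the time integrals over $\R_+$ absolutely convergent and legitimises the manipulations below.

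For \eqref{eq:result1 proposition} I would integrate the weak form of \eqref{eq:parabolic dirichlet problem} in time over $\R_+$. Testing against an arbitrary $v\in H^1_0(K_R)$ and using $\int_0^{+\infty}\partial_t u^k_R\,dt = u^k_R(\cdot,+\infty) - u^k_R(\cdot,0) = -g^k$, where the vanishing at $t=+\infty$ comes from the decay estimate, one obtains $\int_{K_R} a\nabla\big(\int_0^{+\infty}u^k_R\,dt\big)\cdot\nabla v\,dy = \langle g^k,v\rangle_{L^2(K_R)}$ for every test function. This is precisely the weak formulation of \eqref{eq:micro_problem_bounded_domain}, so by uniqueness of the weak solution $\int_0^{+\infty}u^k_R\,dt = \psi^k_R$. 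The step requiring care is the interchange of the time integral with the spatial weak gradient, which I would justify through the absolute convergence supplied by the decay estimate together with the regularity $\partial_t u^k_R\in L^2(\R_+;H^{-1}(K_R))$ built into $X_0(\R_+,K_R)$.

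For \eqref{eq:result2 proposition} I would set $\Psi^k(t) := \int_t^{+\infty} u^k_R(\cdot,s)\,ds$, so that $\partial_t\Psi^k = -u^k_R$, $\Psi^k(0)=\psi^k_R$ by the first identity, and, integrating the equation in time, $A\Psi^k(t) = u^k_R(\cdot,t)$. Using the self-adjointness of $A$ and integrating by parts once in time,
\begin{align*}
\int_0^{+\infty}\langle u^k_R, u^j_R\rangle_{L^2(K_R)}\,dt
&= \int_0^{+\infty}\langle A\Psi^k, u^j_R\rangle_{L^2(K_R)}\,dt
= -\int_0^{+\infty}\langle \Psi^k, \partial_t u^j_R\rangle_{L^2(K_R)}\,dt \\
&= \langle \psi^k_R, g^j\rangle_{L^2(K_R)} - \int_0^{+\infty}\langle u^k_R, u^j_R\rangle_{L^2(K_R)}\,dt,
\end{align*}
where the boundary contribution at $t=+\infty$ drops by the decay estimate and the one at $t=0$ produces $\langle\Psi^k(0),u^j_R(\cdot,0)\rangle = \langle\psi^k_R,g^j\rangle$. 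Rearranging gives $2\int_0^{+\infty}\langle u^k_R,u^j_R\rangle\,dt = \langle\psi^k_R,g^j\rangle_{L^2(K_R)}$, and substituting $g^j = A\psi^j_R$ followed by one integration by parts converts the right-hand side into $\int_{K_R}\nabla\psi^k_R\cdot a\nabla\psi^j_R\,dy$, as claimed. Equivalently, expanding $g^k$ in the eigenbasis of $A$ reduces both identities to the scalar computations $\int_0^{+\infty}e^{-\lambda_m t}\,dt = \lambda_m^{-1}$ and $\int_0^{+\infty}e^{-2\lambda_m t}\,dt = (2\lambda_m)^{-1}$.

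The main obstacle is not the algebra but the functional-analytic bookkeeping: making rigorous the identity $u^k_R(\cdot,+\infty)=0$ and the time integrations by parts within the weak class $X_0(\R_+,K_R)$, and verifying that $\Psi^k$ genuinely lies in $H^1_0(K_R)$ with $A\Psi^k = u^k_R$. I expect the spectral decomposition of $A$ to be the cleanest device for all of these, since it turns every interchange of summation, integration and the action of $A$ into a dominated-convergence argument controlled by the geometric factors $e^{-\lambda_m t}$.
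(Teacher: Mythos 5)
Your argument is correct, and both identities come out with the right constants, but you reach them by a genuinely different route than the paper. For \eqref{eq:result1 proposition} the paper works with the Dunford integral representation $e^{-tA}=\frac{1}{2\pi i}\int_\Gamma e^{-tz}(zI-A)^{-1}\,dz$, swaps the time and contour integrals by Fubini, and uses the holomorphic functional calculus to evaluate $\frac{1}{2\pi i}\int_\Gamma z^{-1}(zI-A)^{-1}g^k\,dz=A^{-1}g^k$; you instead integrate the weak formulation of the parabolic problem over $\R_+$ and invoke uniqueness of the weak solution of \eqref{eq:micro_problem_bounded_domain}. Your version is more elementary and avoids the sectorial-operator machinery, at the price of having to justify that $\int_0^{+\infty}u^k_R\,dt$ actually lies in $H^1_0(K_R)$ and that the time integral commutes with the weak gradient --- you correctly flag this as the delicate point, and your proposed fix via the spectral decomposition of the self-adjoint operator $A$ (with compact resolvent, so $\lambda_1>0$ and $\int_0^\infty e^{-\lambda_m t}\,dt=\lambda_m^{-1}$ termwise) closes it; note that near $t=0$ one needs the smoothing bound $\|\nabla u^k_R(\cdot,t)\|_{L^2}\lesssim t^{-1/2}$, which is integrable, or equivalently the summability of $\lambda_m^{-1}|g^k_m|^2$. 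For \eqref{eq:result2 proposition} the paper tests the elliptic weak form with $\psi^j_R=\int_0^\infty u^j_R\,dt$ and then uses the semigroup splitting $\langle g^k,e^{-tA}g^j\rangle=\langle e^{-tA/2}g^k,e^{-tA/2}g^j\rangle$ together with the substitution $t/2\mapsto t$ to generate the factor $2$; your device of introducing $\Psi^k(t)=\int_t^{+\infty}u^k_R\,ds$ with $A\Psi^k=u^k_R$ and integrating by parts in time produces the same factor by a doubling identity. Both proofs of the second identity rest essentially on the self-adjointness of $A$; the paper's splitting trick is shorter, while your integration-by-parts version makes the role of the boundary terms at $t=0$ and $t=+\infty$ more explicit and is perhaps easier to make fully rigorous within $X_0(\R_+,K_R)$.
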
	
	\begin{proof}
	
    We reformulate problem \eqref{eq:parabolic dirichlet problem} as the abstract Cauchy problem in $L^2(K_{R})$
    
    \begin{equation*}
	\left \lbrace
	\begin{aligned}
	&\frac{d u^k_R}{dt} + A u^k_R = 0 \\
	&u^k_{R}(0) = g^k, \quad   g^k(y) = \nabla \cdot \left( a(y) \mathbf{e}_k \right) \text{ in } L^2(K_{R}). 
	\end{aligned}
	\right.
	\end{equation*}
	
Here, the operator $A: H_0^1(K_{R}) \to H^{-1}(K_{R})$ is defined as $Au := -\nabla \cdot \left( a\nabla u\right)$. Then, $u^k_{R}(t) = e^{-t A } g^k$. We know that $\sigma(A)$, the spectrum of $A$, is contained in an open sectorial domain $\alpha+S_{\omega}$, where $\alpha\in\R$, $\alpha>0$ and
		\[S_{\omega} = \set{ z \in \mathbb{C} : \abs{\text{arg } z } < \omega,\, 0<\omega <\frac{\pi}{2} }. \] 
		Then, the Dunford integral representation 
		\begin{equation*}
		e^{-tA} = \frac{1}{2\pi i}\int_{\Gamma} e^{-tz} (zI - A)^{-1}\,dz
		\end{equation*}
		holds, where $\Gamma$ is an infinite curve lying in $\rho(A) := \mathbb{C} \setminus \sigma(A) $ and surrounding $\sigma(A) $ counterclockwise. Then, integrating in time we obtain
		\begin{align*}
		\int_{0}^{+\infty} u^k_{R}(t) \, dt &= \int_{0}^{+\infty} \frac{1}{2 \pi i} \int_{\Gamma} e^{-tz} \left(z I - A \right)^{-1} g^k \,dz \, dt \\
		&= \frac{1}{2 \pi i} \int_{\Gamma} \int_{0}^{+\infty} e^{-tz} \, dt \left(z I - A \right)^{-1} g^k \,dz \\
		&= \frac{1}{2 \pi i} \int_{\Gamma} \frac{1}{z} \left(z I - A \right)^{-1} g^k \,dz = A^{-1} g^k.		
		\end{align*}
		
		The first equality is given by the Dunford integral formula. The second equality is obtained by Fubini's theorem. The third equality is true because the double integral is bounded, $\lim_{t \rightarrow + \infty} e^{-tz} = 0$ since $Re(z)>0$ on $\Gamma  $. The last equality follows from the fact that the function $f(z) = 1/z$ is holomorphic in the interior of $\alpha + S_{\omega}$. 
		Since $A$ is an isomorphism and ${\psi^k_{R}}$ is the weak solution of ${A\psi^k_{R} = g^k}$, we have that $A^{-1} g^{k} = \psi^k_R$ and \eqref{eq:result1 proposition} is proved. 
		
		To prove \eqref{eq:result2 proposition}, we write the weak formulation of \eqref{eq:micro_problem_bounded_domain} and choose ${\psi^j_{R} = \int_{0}^{+\infty} u^j_{R} \; dt}$ as test function:
		\begin{align*}
		\int_{K_R} \nabla \psi^j_{R} \cdot a(y) \nabla \psi^k_{R} \,dy
		&= \left( \nabla \cdot \left( a \mathbf{e}_k \right), \psi^j_{R} \right)_{L^2(K_R)} \\
		&= \int_{0}^{+\infty} \left( \nabla\cdot\left(a \mathbf{e}_k \right) , u^j_{R} \right)_{L^2(K_R)} \,dt.
		\end{align*}
		Using the semigroup property of $e^{-tA}$ and the self-adjointness of $A$ we obtain
		\[
		\int_{K_{R}}\nabla\psi^k_{R}(y) \cdot a(y)\nabla \psi^j_{R}(y) \;dy=  \int_{0}^{+\infty} \left(u^k_{R}(\cdot,t/2), u^j_{R}(\cdot,t/2) \right)_{L^2(K_R)} \;dt,
		\]
		and conclude the proof by the change of variable $t/2\mapsto t$.
	\end{proof}

By the symmetry of $a$ and the weak form of \eqref{eq:micro_problem_bounded_domain}, we can rewrite \eqref{eq:upscaling_formula_bounded_domain} as:
\begin{equation*}
a^{0,R}_{ij}  = \frac{ 1 }{ \abs{ K_{R} } } \int_{  K_{R } } \mathbf{e}_i \cdot a (y) \mathbf{e}_j \,dy - \frac{ 1 }{ \abs{ K_{R} } } \int_{  K_{R } } \nabla \psi^i_{R} ( y ) \cdot a (y)  \nabla \psi^j_{R} ( y ) \;dy.
\end{equation*}
Theorem \ref{thm:weak form convergence} provides an equivalent expression, based on the solutions $u^{i}_R$ of the parabolic cell problems \eqref{eq:parabolic dirichlet problem} over infinite time domain, for the second integral in the above expression. This result is summarized as a corollary below.
\begin{corollary}
	Let $a$ satisfy the assumptions of Theorem \ref{thm:weak form convergence}, $a^{0,R}$ be defined by  \eqref{eq:upscaling_formula_bounded_domain} and $a^{0,R,R,+\infty}$ be defined by \eqref{eq:new a0 filtered} with $\mu_R \in \mathbb{F}_0(K_R)$ (note that $L=R$).
	Then
	\begin{equation*}
	a^{0,R,R,+\infty} = a^{0,R}.
	\end{equation*}
	Hence, using the classical result stated in Section \ref{sec:intro}, there exist a constant $C>0$ independent of $R$ such that 
	\begin{equation*}
	\|a^{0,R,R,+\infty} - a^{0}\|_F \le \frac{C}{R}.
	\end{equation*}
\end{corollary}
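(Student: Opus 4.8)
The plan is to reduce the corollary to a direct substitution into Theorem~\ref{thm:weak form convergence}, so no new analysis is needed. First I would unpack the meaning of $\mu_R \in \mathbb{F}_0(K_R)$. By Definition~\ref{def:Filters} the one–dimensional generator is $\mu = \mathds{1}_{[-1/2,1/2]}$, hence
\[
\mu_R(y) = \frac{1}{R^d}\prod_{i=1}^{d}\mathds{1}_{[-1/2,1/2]}\!\left(\frac{y_i}{R}\right) = \frac{1}{R^d}\,\mathds{1}_{K_R}(y).
\]
Since $\abs{K_R}=R^d$, the filter is simply the normalized characteristic function $\mu_R = \frac{1}{\abs{K_R}}\mathds{1}_{K_R}$, so that every $\mu_R$-weighted integral over $K_L=K_R$ collapses to $\frac{1}{\abs{K_R}}$ times the plain integral over $K_R$.

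Next I would insert $L=R$ and $T=+\infty$ into the definition~\eqref{eq:new a0 filtered}, which by the previous step yields
\[
a^{0,R,R,+\infty}_{ij} = \frac{1}{\abs{K_R}}\int_{K_R}\mathbf{e}_i\cdot a(y)\mathbf{e}_j\,dy \; - \; \frac{2}{\abs{K_R}}\int_{0}^{+\infty}\!\!\int_{K_R}u^i_R(y,t)\,u^j_R(y,t)\,dy\,dt.
\]
The key step is then to apply identity~\eqref{eq:result2 proposition} of Theorem~\ref{thm:weak form convergence}, which rewrites the space--time integral of $u^i_R u^j_R$ as $\tfrac12\int_{K_R}\nabla\psi^i_R\cdot a\,\nabla\psi^j_R\,dy$. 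The factor $2$ in~\eqref{eq:new a0 filtered} and the factor $\tfrac12$ in~\eqref{eq:result2 proposition} cancel, leaving
\[
a^{0,R,R,+\infty}_{ij} = \frac{1}{\abs{K_R}}\int_{K_R}\mathbf{e}_i\cdot a(y)\mathbf{e}_j\,dy - \frac{1}{\abs{K_R}}\int_{K_R}\nabla\psi^i_R(y)\cdot a(y)\,\nabla\psi^j_R(y)\,dy,
\]
which is precisely the rewritten expression for $a^{0,R}_{ij}$ displayed immediately before the corollary (obtained from~\eqref{eq:upscaling_formula_bounded_domain} using the symmetry of $a$ and the weak form of~\eqref{eq:micro_problem_bounded_domain}). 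This establishes the equality $a^{0,R,R,+\infty}=a^{0,R}$.

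Finally, the convergence estimate follows with no further work: the classical resonance-error bound recalled in Section~\ref{sec:intro}, namely $\norm{a^{0,R}-a^0}_F \le C/R$ for non-integer $R$, transfers verbatim to $a^{0,R,R,+\infty}$ once the two tensors are identified. I do not expect a genuine obstacle in this argument; the only point requiring care is bookkeeping, i.e. matching the normalization $\frac{1}{\abs{K_R}}$ arising from $\mu_R$ against the $\abs{K_R}^{-1}$ in~\eqref{eq:upscaling_formula_bounded_domain} and verifying the $2$ versus $\tfrac12$ cancellation, so that the identity holds entrywise for all $1\le i,j\le d$.
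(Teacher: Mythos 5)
Your proof is correct and follows essentially the same route as the paper: the paper's (implicit) argument is exactly to rewrite $a^{0,R}$ via the weak form of \eqref{eq:micro_problem_bounded_domain} and then substitute identity \eqref{eq:result2 proposition} of Theorem \ref{thm:weak form convergence}, with the $\mathbb{F}_0$ filter reducing the $\mu_R$-weighted integrals to plain averages over $K_R$. Your bookkeeping of the $2$ versus $\tfrac12$ factors and the $\abs{K_R}^{-1}$ normalization is accurate, and the final bound indeed transfers verbatim from the classical estimate once the two tensors are identified.
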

From this analysis, we can immediately see that, when ${T = +\infty}$, the parabolic approach does not result in any gain in comparison to the standard cell-problem \eqref{eq:micro_problem_bounded_domain}, as the two strategies are equivalent and have first order convergence rates in $1/R$.
	\subsection{Exponential convergence of the parabolic approach \eqref{eq:new a0 filtered}}
	The following Theorem \ref{thm: modelling error parabolic correctors filter} shows that an exponential convergence rate for the boundary error can be attained when the parameter $T$ is sufficiently small and an appropriate filter is used, as fully proved in \cite{AAP18b}.
If the coefficients $a_{ij}(y)$ are $K$-periodic and \eqref{eq:parabolic dirichlet problem} is solved with periodic boundary conditions and integer $R$, then Theorem \ref{thm:weak form convergence} still holds true by substituing functions $\psi^i_R$ in \eqref{eq:result1 proposition} and \eqref{eq:result2 proposition} with $\chi^j$ defined in \eqref{eq:micro_problem_periodic_domain}. Thus, it is possible to find an equivalent formula for the exact homogenized coefficients, which is based on a parabolic model with periodic boundary conditions.
The proof of Theorem \ref{thm: modelling error parabolic correctors filter} is based on such an equivalence result and on a decomposition of the resonance error in several terms, respectively accounting for the averaging error of periodic functions, the boundary mismatch between the two problems and the truncation in time.
\begin{theorem}\label{thm: modelling error parabolic correctors filter}
	Let $a\in\mathcal{M}(\alpha,\beta,K_R)$ be $K$-periodic, $\nabla\cdot\left(a\mathbf{e}_i\right)\in L^2(K_R)$ for any $i=1,\dots,d$, $\mu_{L} \in \mathbb{F}_{q}(K_{L})$, for $0<L<\lfloor R \rfloor$, $R>1$ and $T>0$. Then
	\begin{equation*}
		\| a^{0,R,L,T} - a^0 \|_F 
		\le
		C \left(
		L^{-(q + 1)} + e^{-\alpha \pi^2 T} + 
	\left(\frac{R}{\sqrt{T}} + 1\right)^{d-1}
		\frac{e^{-c\frac{\abs{R-L}^2}{T}}}{\abs{R-L}}
		+
		\frac{T^{d+1}}{\abs{R-L}^{2d}}
		e^{-2c\frac{\abs{R-L}^2}{T}}
		\right),
	\end{equation*}
	where $C>0$ is a constant independent of $R,L,T$ and $c = 1/4\beta$.
	Moreover, the choices $	L = (1-k_{o}) R$ and $T  = k_T R$, with $0 < k_{o} < 1$ and $k_T = \frac{k_{o}}{\pi \sqrt{4 \beta \alpha}}$ result in the following convergence rate in terms of $R$:
	\begin{equation}\label{eq:final bound parabolic}
	\| a^{0,R,L,T} - a^0 \|_F \le 
	C 
	\left[
	R^{-(q+1)} + 
	\gamma(R) e^{-\zeta R} 
	\right],
	\end{equation}
	with
	\begin{equation*}
	\zeta = \frac{\pi k_{o}}{2\sqrt{\beta/\alpha}}\text{ and }\;
	\gamma(R) =
	1 +
	\frac{\left(\sqrt{R} + 1\right)^{d-1}}{R} +
	\frac{e^{-\frac{\pi k_{o}}{2\sqrt{\beta/\alpha}} R} }{R^{d-1}}. 
	\end{equation*}
\end{theorem}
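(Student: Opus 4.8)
The plan is to compare the computed tensor $a^{0,R,L,T}$ with an exact representation of $a^0$ built from a \emph{periodic} parabolic cell problem, and to split the discrepancy into three independent error mechanisms: the filtering (averaging) error, the time-truncation error, and the boundary mismatch between Dirichlet and periodic solutions. First I would invoke the periodic counterpart of Theorem~\ref{thm:weak form convergence}, announced in the paragraph preceding the statement: letting $v^i$ denote the $K$-periodic solution of \eqref{eq:parabolic dirichlet problem} (same initial datum $g^i=\nabla\cdot(a\mathbf e_i)$, periodic boundary conditions, infinite horizon), combining \eqref{eq: upscaling formula} with the weak form of \eqref{eq:micro_problem_periodic_domain} tested against $\chi^j$ yields
\[
a^0_{ij} = \frac{1}{\abs{K}}\int_{K}\mathbf e_i\cdot a\,\mathbf e_j\,dy - \frac{2}{\abs{K}}\int_0^{+\infty}\!\!\int_{K} v^i v^j\,dy\,dt.
\]
Since the integrands $\mathbf e_i\cdot a\,\mathbf e_j$ and $\int_0^{+\infty}v^iv^j\,dt$ are $K$-periodic, I would replace their cell-averages by the filtered averages $\int_{K_L}(\cdot)\,\mu_L$. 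Properties (ii)--(iii) of $\mu\in\mathbb F_q$ (vanishing derivatives up to order $q-1$ and unit mass) let one integrate by parts $q+1$ times in the Poisson-summation representation of the averaging error, producing the $\mathcal O(L^{-(q+1)})$ term; the filter lemma of \cite{Glo11} can be cited here.

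Abbreviating $u^i:=u^i_R$, I would then write
\[
a^{0,R,L,T}_{ij}-a^0_{ij}
= \underbrace{2\int_T^{+\infty}\!\!\int_{K_L} v^iv^j\,\mu_L\,dy\,dt}_{\text{time truncation}}
-\underbrace{2\int_0^{T}\!\!\int_{K_L}\big(u^iu^j-v^iv^j\big)\,\mu_L\,dy\,dt}_{\text{boundary mismatch}}
+\mathcal O(L^{-(q+1)}).
\]
For the time-truncation term, $g^i$ is a divergence and hence mean-free, so $v^i=e^{-tA_{\mathrm{per}}}g^i$ decays like $e^{-\lambda_1 t}$, where the spectral gap $\lambda_1$ of the periodic operator is bounded below by a fixed multiple of $\alpha\pi^2$; integrating the resulting $e^{-2\lambda_1 t}$ tail from $T$ to $+\infty$ gives the $e^{-\alpha\pi^2 T}$ factor. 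For the boundary-mismatch term I would set $w^i:=u^i-v^i$, which solves the homogeneous equation with \emph{zero} initial datum and Dirichlet data $-v^i|_{\partial K_R}$, and expand $u^iu^j-v^iv^j=v^iw^j+w^iv^j+w^iw^j$.

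The crux, and the step I expect to be the main obstacle, is estimating $w^i$ inside the interior window $K_L$, which sits at distance $\gtrsim\abs{R-L}$ from $\partial K_R$. This is where the exponential gain originates: using Aronson-type Gaussian off-diagonal bounds for the parabolic Green's function (equivalently, a Davies-type energy estimate with an exponential weight $e^{\rho\,\mathrm{dist}(y,\partial K_R)}$), the boundary values propagate into $K_L$ only with weight $e^{-c\abs{R-L}^2/T}$; tracking the Gaussian exponent carefully is what pins down $c=1/(4\beta)$, the $4\beta$ mirroring the upper ellipticity bound in the heat-kernel exponent $\abs{x-y}^2/(4\beta t)$. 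Integrating this decay against $\mu_L$ over $[0,T]$, the surface measure $\sim R^{d-1}$ of $\partial K_R$ together with the Gaussian scaling produces the prefactor $(R/\sqrt T+1)^{d-1}$ and the kernel normalisation the factor $\abs{R-L}^{-1}$: the terms linear in $w$ give the $e^{-c\abs{R-L}^2/T}$ summand, while the quadratic term $w^iw^j$ gives the $e^{-2c\abs{R-L}^2/T}$ summand with its $T^{d+1}\abs{R-L}^{-2d}$ prefactor. Collecting the four contributions yields the first displayed bound; the full technical estimates are carried out in \cite{AAP18b}.

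Finally, the rate \eqref{eq:final bound parabolic} is a direct substitution. With $L=(1-k_{o})R$ one has $\abs{R-L}=k_{o}R$, and with $T=k_TR$, $k_T=k_{o}/(\pi\sqrt{4\beta\alpha})$, a short computation gives $c\abs{R-L}^2/T=\alpha\pi^2 T=\zeta R$, with $\zeta=\tfrac{\pi k_{o}}{2}\sqrt{\alpha/\beta}$, so every exponential collapses to $e^{-\zeta R}$ (the quadratic mismatch term carrying a surplus $e^{-\zeta R}$, which is why it reappears inside $\gamma(R)$). Meanwhile $L^{-(q+1)}=\mathcal O(R^{-(q+1)})$, the factor $(R/\sqrt T+1)^{d-1}\abs{R-L}^{-1}$ becomes $\mathcal O\big((\sqrt R+1)^{d-1}/R\big)$, and $T^{d+1}\abs{R-L}^{-2d}=\mathcal O(R^{1-d})$, so the algebraic prefactors assemble exactly into $\gamma(R)$.
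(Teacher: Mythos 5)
Your proposal is correct and follows essentially the same route as the paper, which itself only sketches this proof (periodic analogue of Theorem~\ref{thm:weak form convergence}, then a splitting into averaging error, time-truncation error and boundary mismatch, with $L,T$ chosen by equating the truncation and boundary exponents) and defers the technical estimates to \cite{AAP18b}. Your decomposition, the identification of the Gaussian off-diagonal bound for $w^i=u^i-v^i$ as the source of the exponential gain, and the final substitution giving $c\abs{R-L}^2/T=\alpha\pi^2T=\zeta R$ all match the paper's outline.
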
 
The term $L^{-(q+1)}$ is the averaging error induced by using a filter function $\mu_L \in \mathbb{F}_{q}(K_L)$, and it can be made arbitrarily small by taking higher values for $q$. The term $e^{-\alpha \pi^2 T}$ originates from using a finite $T$ for the parabolic cell problem \eqref{eq:parabolic dirichlet problem}. The remaining terms are the errors due to the boundary conditions, which decay exponentially provided $T < |R-L|^2$. Moreover, the quasi-optimal scaling of $L$ and $T$ in terms of $R$ are found by equating the exponents of the truncation and boundary errors.   
Note that bound \eqref{eq:final bound parabolic} is similar to the one obtained in \cite{Glo11}, except for the term $T^{-2}$ that accounts for the effect of using a biased model equation.
	\subsection{Exponential convergence of the modified elliptic approach \eqref{eq:new_elliptic_upscaling_formula}}
The boundary error associated with formula \eqref{eq:new_elliptic_upscaling_formula} relies on how the parameter $T$ is tuned, similarly to the parabolic case. If $T = +\infty$, then the term $e^{-A_N T} g^{j}$ vanishes, and the standard Dirichlet cell problem \eqref{eq:micro_problem_bounded_domain} is recovered. Hence, no improvement over the first order convergence rate will be observed. In the following theorem, we specify the precise exponential upper bound for the modified elliptic approach. The proof is based on the equality ${\lim\limits_{N\to\infty}\chi^i_{R,T,N} = \int_{0}^{T} u^i_R \,dt }$ and it is developed in \cite{AAP18c}.

\begin{theorem}\label{thm:Exponential_Rate_Modified_Elliptic}
	Let $a \in \mathcal{M}(\alpha,\beta,K_{R})$ be $K$-periodic, $\nabla\cdot\left(a\mathbf{e}_i\right)\in L^2(K_R)$ for any $i=1,\dots,d$, $\mu_{L} \in \mathbb{F}_{q}(K_{L})$, for $0<L<\lfloor R \rfloor$, $R>1$ and $T>0$. Then
\begin{equation*}
\| b^{0,R,L,T,N} -  a^{0} \|_F  
\le C \left(
\sqrt{T} L^{-(q+1)} 
+ e^{-\frac{\alpha \pi^2}{2} T} 
+ \dfrac{R^{d-1}T^{\frac{5-d}{2}}}{\left| R-L \right|^3} e^{-c \frac{\left| R-L \right|^2}{T}} 
+ \dfrac{R^2}{L^{d/2}} e^{-\frac{C_d N^{2/d} T}{R^2}} 
\right),
\end{equation*}
where $C>0$ is a constant independent of $R,L$ and $T$ but may depend on $a$ and $\mu$, $c =  1/4\beta$ and $c_d>0$ is a constant (independent of $R,T,L$) that may depend on the dimension $d$, $\alpha$ and $\beta$. 
Moreover, the choices $L = (1-k_{o}) R$, $T  = k_T R$ and $N = \left\lfloor{\left( \frac{\alpha \pi^2}{2 c_d} \right)^{d/2} R^{d}} \right\rfloor$, with $0 < k_{o} < 1$ and $k_T = \frac{k_{o}}{ \pi \sqrt{2 \beta \alpha}}$ result in the following convergence rate in terms of $R$:
\begin{align*}
\| b^{0,R,L,T,N} -  a^{0} \|_F  \leq C \left[  R^{-q-\frac{1}{2}} + \gamma(R) e^{-\zeta R} \right],
\end{align*}
with 
\begin{equation*}
\zeta = \frac{\pi k_{o}}{\sqrt{8 \beta/\alpha}} \text{ and }\;
\gamma(R) = R^{2-d/2} + 
\ R^{\frac{d-3}{2}} + 1.
\end{equation*}
\end{theorem}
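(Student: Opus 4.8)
The plan is to reduce the estimate for the modified elliptic scheme to the parabolic bound of Theorem~\ref{thm: modelling error parabolic correctors filter}, at the cost of one extra term accounting for the spectral truncation at level $N$. The first step is to establish the bridge $\chi^i_{R,T,\infty} := \lim_{N\to\infty}\chi^i_{R,T,N} = \int_{0}^{T} u^i_R\,dt$. Writing \eqref{eq:new_elliptic_micro_problem} in the operator form $A\chi^i_{R,T,N} = g^i - [e^{-A_N T}g^i]$ with $A = -\nabla\cdot(a\nabla\,\cdot)$ gives $\chi^i_{R,T,N} = A^{-1}g^i - \sum_{k=1}^{N}\lambda_k^{-1}e^{-\lambda_k T}g^i_k\varphi_k$. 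Letting $N\to\infty$ and using the operator identity $\int_0^T e^{-tA}\,dt = A^{-1}(I-e^{-TA})$ together with $A^{-1}g^i = \psi^i_R = \int_0^{+\infty}u^i_R\,dt$ from Theorem~\ref{thm:weak form convergence} identifies $\chi^i_{R,T,\infty} = \int_0^T u^i_R\,dt$. The same computation shows that the residual of the limiting cell problem is $-\nabla\cdot\bigl(a(\nabla\chi^i_{R,T,\infty}+\mathbf{e}_i)\bigr) = -u^i_R(T) = -e^{-TA}g^i$, a fact I will use for the time-truncation estimate.

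Next I would split $b^{0,R,L,T,N}-a^0$ by the triangle inequality into a spectral part $\|b^{0,R,L,T,N}-b^{0,R,L,T,\infty}\|_F$ and a modelling part $\|b^{0,R,L,T,\infty}-a^0\|_F$, where $b^{0,R,L,T,\infty}$ denotes \eqref{eq:new_elliptic_upscaling_formula} evaluated at $\chi^i_{R,T,\infty}$. For the spectral part, linearity of \eqref{eq:new_elliptic_upscaling_formula} reduces the question to the corrector tail $\chi^i_{R,T,N}-\chi^i_{R,T,\infty} = -\sum_{k>N}\lambda_k^{-1}e^{-\lambda_k T}g^i_k\varphi_k$. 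Since $\lambda_k \ge \lambda_{N+1}$ for $k>N$ and $\int_{K_R}a|\nabla\varphi_k|^2 = \lambda_k$, its energy is bounded by $e^{-\lambda_{N+1}T}$ times a norm of $g^i$, and Weyl's law for $A$ on $K_R$, $\lambda_{N+1}\sim c_d N^{2/d}/R^2$, yields the decay $e^{-c_d N^{2/d}T/R^2}$; accounting for the prefactors produced by the gradient, the source norm $\|g^i\|_{L^2(K_R)}$ and the filter $\mu_L$ gives the fourth term $R^2 L^{-d/2}e^{-c_d N^{2/d}T/R^2}$.

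For the modelling part I would follow the decomposition used to prove Theorem~\ref{thm: modelling error parabolic correctors filter}. Invoking the periodic analogue of Theorem~\ref{thm:weak form convergence} (valid for $K$-periodic $a$, integer $R$ and periodic boundary conditions) gives an exact representation of $a^0$ through the time-truncated periodic corrector $\int_0^T \tilde u^i\,dt$, which I then compare with $\chi^i_{R,T,\infty}=\int_0^T u^i_R\,dt$. This produces exactly three contributions. The filter averaging error now carries a factor $\sqrt T$, because the corrector is the time integral $\int_0^T u^i_R\,dt$ and a Cauchy--Schwarz estimate in time costs $\sqrt T$, giving $\sqrt T\,L^{-(q+1)}$. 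The time-truncation error is governed by the decay $\|u^i_R(T)\|\sim e^{-\lambda_1 T}$ of the residual above, where the periodic spectral gap fixes the $R$-independent exponent $\tfrac{\alpha\pi^2}{2}T$ — half of the parabolic $\alpha\pi^2 T$, since here the residual enters the formula linearly rather than quadratically. The boundary mismatch between the Dirichlet and the periodic parabolic problems is controlled by the Gaussian-in-space decay $e^{-c|R-L|^2/T}$ of the parabolic Green's function across the gap $|R-L|$ separating $K_L$ from $\partial K_R$, with polynomial prefactor $R^{d-1}T^{(5-d)/2}|R-L|^{-3}$.

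The main obstacle is this last boundary term: bounding the influence of the mismatched boundary data requires quantitative heat-kernel estimates for $A$ on $K_R$, namely the super-exponential decay $e^{-c|R-L|^2/T}$ away from $\partial K_R$ with sharp tracking of the polynomial prefactors in $R$, $T$ and $|R-L|$; this is the technical core shared with Theorem~\ref{thm: modelling error parabolic correctors filter}, valid only in the regime $T<|R-L|^2$. Once the four terms are assembled, the final rate follows by balancing exponents: the choices $L=(1-k_o)R$ and $T=k_T R$ make the boundary and time-truncation exponents comparable and of order $R$, while $N = \lfloor(\alpha\pi^2/2c_d)^{d/2}R^d\rfloor$ equates the spectral exponent $c_d N^{2/d}T/R^2$ with $\tfrac{\alpha\pi^2}{2}T$, so the eigenmode error decays at the same rate $e^{-\zeta R}$; collecting the surviving polynomial factors into $\gamma(R)$, and noting $\sqrt T\,L^{-(q+1)}\sim R^{-q-1/2}$, gives the stated bound. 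Finally I would remark that $N\sim R^d$ keeps the number of computed eigenmodes of the same order as the degrees of freedom needed to resolve the cell problem, so the cost remains comparable to a classical elliptic solve.
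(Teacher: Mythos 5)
Your outline follows essentially the same route the paper indicates: it hinges on the identity $\lim_{N\to\infty}\chi^i_{R,T,N}=\int_0^T u^i_R\,dt$ (which you derive correctly via $A\chi^i_{R,T,N}=g^i-e^{-A_NT}g^i$ and $A^{-1}(I-e^{-TA})=\int_0^Te^{-tA}\,dt$), splits off the spectral-truncation error via Weyl asymptotics, and handles the remaining averaging, time-truncation and boundary terms with the same decomposition and heat-kernel estimates underlying Theorem~\ref{thm: modelling error parabolic correctors filter}; the paper itself only states this strategy and defers the technical details to the companion preprint \cite{AAP18c}. Your parameter balancing and the accounting for the halved exponent $\frac{\alpha\pi^2}{2}T$ are consistent with the stated bounds.
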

The upper bounds in Theorem \ref{thm:Exponential_Rate_Modified_Elliptic} have a similar character to those in Theorem \ref{thm: modelling error parabolic correctors filter}, except the error $e^{-\frac{c_d N^{2/d} T}{R^2}}$ which comes from the spectral truncation. In particular, Theorem \ref{thm:Exponential_Rate_Modified_Elliptic} shows that an exponential convergence (for the spectral error) will be achieved if the number of modes scales as ${N = O(R^d)}$ and ${T = O(R)}$. In practice, small values of $R$, e.g. $R=10$, are preferred for simulations. This makes the elliptic approach very favourable both from a computation and accuracy point of view.  	
	
\section{Numerical validation}
Here, we show the results of numerical tests performed using a two-dimensional periodic tensor for validating the convergence rates of Theorems \ref{thm: modelling error parabolic correctors filter} and \ref{thm:Exponential_Rate_Modified_Elliptic} (note that both theorems assume periodicity of the coefficient). In particular, we consider the following $2\times 2$ tensor 
\begin{equation}\label{eq:tensor exp}
a(y) = 
\begin{pmatrix}
\left(3 + \frac{2\sqrt{17}}{8 \sin( 2\pi y_1)+9}\right)^{-1} & 0 \\
0 & \left( \frac{1}{20} + \frac{ 2\sqrt{17} }{ 8\cos( 2\pi y_2)+9 } \right)^{-1}
\end{pmatrix}.
\end{equation}
We compute a numerical approximation of $a^{0,R,L,T}$ and $ b^{0,R,L,T,N}$ for many values of $R$ and with the optimal values for $L$ and $T$ (as expressed in Theorems \ref{thm: modelling error parabolic correctors filter} and \ref{thm:Exponential_Rate_Modified_Elliptic}). The error between the numerical approximations and the exact value $a^0$ is then plotted against $R$, see Figure \ref{fig:mod_error_PoissonMS2D}. 
The reference value for $a^0$ is computed by solving the standard elliptic corrector problems \eqref{eq:micro_problem_periodic_domain} with $R=1$, periodic boundary conditions and by using formula \eqref{eq: upscaling formula}. Numerical approximations for the parabolic formulation \eqref{eq:parabolic dirichlet problem} are computed by a P1-Finite Elements discretization with meshsize $h = 1/100$ in space, while a Rosenbrock formula of order 2 with tolerance $tol=10^{-5}$ and adaptive stepping scheme is used in time. For the modified elliptic approach we used a meshsize $h = 1/160$ and $N=60$ eigenmodes for approximating the right-hand side $e^{-A_N T} g^{j}$, in disregard of the size $R$. 

The decay of the overall upscaling error is pictured in Figure \ref{fig:mod_error_PoissonMS2D}. In particular, for relatively low to moderate values of $R$, e.g.,\ $1<R\leq 10$, the exponentially decaying boundary error is negligible in comparison to the averaging error when $q=1$ and $q=3$.   

\begin{figure}[h!]
	\centering
%
%
\definecolor{mycolor1}{rgb}{0.00000,0.44706,0.74118}%
\definecolor{mycolor2}{rgb}{0.85098,0.32549,0.09804}%
\definecolor{mycolor3}{rgb}{0.46600,0.67400,0.18800}%
\definecolor{mycolor4}{rgb}{0.46667,0.67451,0.18824}%
\begin{tikzpicture}

\begin{axis}[%
width=.35\linewidth,
height=1.5in,
scale only axis,
xmode=log,
xmin=1,
xmax=20,
xtick = {1,10,20},
xticklabels={1, 10, 20},
minor xtick={1,2,3,4,5,6,7,8,9,10,20},
xlabel style={font=\color{white!15!black}},
xlabel={$R$},
ymode=log,
ymin=1e-05,
ymax=1,
ytick={ 1e-05, 0.0001,  0.001,   0.01,    0.1,      1},
yminorticks=true,
axis background/.style={fill=white},
title={Parabolic approach},
xmajorgrids,
xminorgrids,
ymajorgrids,
yminorgrids,
legend style={font=\color{white!15!black}},
legend columns = {-1},
legend entries = {$q=1$;, $\mathcal{O}(R^{-2})$;, $q=3$;, $\mathcal{O}(R^{-4})$},
legend to name = named
]
\addplot [color=mycolor1]
  table[row sep=crcr]{%
1	0.330626169724616\\
1.2	0.254397599946732\\
1.39	0.184897437031662\\
1.58	0.122579552786504\\
1.78	0.0725271626131855\\
1.97	0.047120242259227\\
2.16	0.0408937890574034\\
2.36	0.0383429264704232\\
2.55	0.0293849653459063\\
2.74	0.0186495499642463\\
2.94	0.0084941502282406\\
3.13	0.00459839651754817\\
3.32	0.0161838491801975\\
3.52	0.0245486987902064\\
3.71	0.0282272214189396\\
3.9	0.0279176671373027\\
4.1	0.0240322584309461\\
4.29	0.0179871671921167\\
4.49	0.0103811034070762\\
4.68	0.00311415743303332\\
4.87	0.0031787610853683\\
5.06	0.00763297784990334\\
5.26	0.00995004774363043\\
5.45	0.00994744888779441\\
5.64	0.00835108046793939\\
5.84	0.00620189542352524\\
6.03	0.00519184201080218\\
6.22	0.00555976915654399\\
6.42	0.00614289189216755\\
6.61	0.00580332404898095\\
6.8	0.00435730439472184\\
7	0.00194678745904454\\
7.19	0.000696093636504132\\
7.38	0.00318093576348139\\
7.58	0.00519075250444629\\
7.77	0.00626153692181883\\
7.96	0.00641787541156037\\
8.16	0.00565247519155928\\
8.35	0.00422278343910843\\
8.54	0.00237277788971828\\
8.74	0.000306708399504325\\
8.93	0.00144936018269495\\
9.12	0.00273442672248211\\
9.32	0.00339641400659792\\
9.51	0.00334909750573223\\
9.7	0.00282898656869318\\
9.9	0.00223602789711051\\
10.09	0.00211872353981891\\
10.28	0.00240313220746367\\
10.48	0.00261926296923823\\
10.67	0.00241687218689819\\
10.86	0.00177223838826022\\
11.06	0.000751119126602053\\
11.25	0.000347523326941479\\
11.44	0.00136970762437616\\
11.64	0.00218434908015978\\
11.83	0.00259670629087917\\
12.02	0.00261164129945081\\
12.22	0.00222108682628889\\
};

\addplot [color=mycolor1, dashed]
table[row sep=crcr]{%
	1	0.4\\
	20	0.001\\
};

\addplot [color=mycolor2]
  table[row sep=crcr]{%
1	0.412623682212494\\
1.2	0.36484430787491\\
1.39	0.317475988451356\\
1.58	0.269287980773917\\
1.78	0.220156342975966\\
1.97	0.178043740180867\\
2.16	0.142858210964592\\
2.36	0.114317321373973\\
2.55	0.0938694748904344\\
2.74	0.0782276076438085\\
2.94	0.0654415609847981\\
3.13	0.0546650459720177\\
3.32	0.0444797836894797\\
3.52	0.0342514361381116\\
3.71	0.0251603654696725\\
3.9	0.016941744641288\\
4.1	0.0095068657210257\\
4.29	0.00379519052112788\\
4.49	0.000823580858230318\\
4.68	0.00363445492762812\\
4.87	0.00526980751594491\\
5.06	0.00591483774014162\\
5.26	0.00576895517418422\\
5.45	0.005106789430847\\
5.64	0.00417816484393616\\
5.84	0.00312837842744531\\
6.03	0.00219268918047705\\
6.22	0.00135856772669628\\
6.42	0.000577171735907957\\
6.61	0.000130887393812718\\
6.8	0.000705725866410474\\
7	0.00122537219697269\\
7.19	0.00158377038743925\\
7.38	0.00179016932673204\\
7.58	0.00183852316850356\\
7.77	0.0017378889026882\\
7.96	0.001523387276908\\
8.16	0.00121475582585467\\
8.35	0.000885943019705818\\
8.54	0.000562466021560541\\
8.74	0.000264843761445098\\
8.93	6.1504477770947e-05\\
9.12	0.000118269364694177\\
9.32	0.000192129013019301\\
9.51	0.000202309990296092\\
9.7	0.000168446392577547\\
9.9	0.000105433975847851\\
10.09	4.58695722412878e-05\\
10.28	7.62934216141347e-05\\
10.48	0.000164596305444128\\
10.67	0.000258704654506013\\
10.86	0.000355435036806718\\
11.06	0.000451141906063479\\
11.25	0.000525701785197033\\
11.44	0.00057755304806179\\
11.64	0.000602965925709496\\
11.83	0.000598993580531919\\
12.02	0.000570828390376854\\
12.22	0.000519703608462209\\
};

\addplot [color=mycolor2, dashed]
  table[row sep=crcr]{%
1	6\\
20	3.75e-05\\
};

\end{axis}
\end{tikzpicture}%
%
%
\definecolor{mycolor1}{rgb}{0.00000,0.44706,0.74118}%
\definecolor{mycolor2}{rgb}{0.85098,0.32549,0.09804}%
\begin{tikzpicture}

\begin{axis}[%
width=.35\linewidth,
height=1.5in,
scale only axis,
xmode=log,
xmin=1,
xmax=20,
xtick = {1,10,20},
xticklabels={1, 10, 20},
minor xtick={1,2,3,4,5,6,7,8,9,10,20},
xlabel style={font=\color{white!15!black}},
xlabel={$R$},
ymode=log,
ymin=1e-05,
ymax=1,
yminorticks=true,
axis background/.style={fill=white},
title={Modified elliptic approach},
xmajorgrids,
xminorgrids,
ymajorgrids,
yminorgrids
]
\addplot [color=mycolor1]
  table[row sep=crcr]{%
1	0.440536317049787\\
1.03072240652209	0.450004534224021\\
1.06238867930668	0.452530862587029\\
1.09502781619681	0.449667702942637\\
1.128669705919	0.442762788987875\\
1.1633451554534	0.432906964984152\\
1.19908591824474	0.420701088988648\\
1.23592472327997	0.406656920841575\\
1.27389530505928	0.390986665714106\\
1.31303243448788	0.373890931389088\\
1.35337195071691	0.355402398986571\\
1.39495079396242	0.335659260799249\\
1.43780703933284	0.314684625939258\\
1.48197993169554	0.292513517174122\\
1.52750992161467	0.26924497252042\\
1.57443870239304	0.244995926970895\\
1.62280924825206	0.219976852127389\\
1.67266585368467	0.19423971709282\\
1.72405417401718	0.167956355554472\\
1.77702126721744	0.141393779568453\\
1.83161563698728	0.114601756701932\\
1.88788727717902	0.0879042328762961\\
1.94588771757639	0.0614156357072889\\
2.00567007108211	0.0354595423859881\\
2.06728908235507	0.0102361627686689\\
2.13080117794186	0.0139470441183656\\
2.19626451794833	0.0367838802510472\\
2.26373904929878	0.0579372542957475\\
2.33328656063125	0.0769921255721296\\
2.40497073887949	0.0934920889099582\\
2.47885722759307	0.106845409641916\\
2.5550136870494	0.116279853719805\\
2.63350985621243	0.120776029473353\\
2.71441761659491	0.119038887390665\\
2.79781105808265	0.109722026708325\\
2.88376654678106	0.0927534531322725\\
2.97236279494606	0.0722036291143319\\
3.06368093306352	0.0554849745314575\\
3.15780458414306	0.0434538130646888\\
3.25481994029442	0.0329538230350217\\
3.35481584165634	0.0227211345684462\\
3.45788385775044	0.0126141630842952\\
3.56411837133441	0.00293986481143105\\
3.67361666483139	0.00589049532568409\\
3.78647900941465	0.0134321132149595\\
3.90280875682923	0.0192453264163819\\
4.0227124340345	0.0229473565094008\\
4.14629984075437	0.0242776693697061\\
4.27368415002449	0.0231699708551524\\
4.40498201182854	0.0198964242143794\\
4.54031365991841	0.0152135448735822\\
4.67980302191621	0.0107490285447129\\
4.82357783279881	0.0089853684275838\\
4.97176975186899	0.0100140596806243\\
5.12451448332012	0.00671095761299412\\
5.281951900505	0.000495029977069905\\
5.44422617402243	0.00631937380769909\\
5.61148590373893	0.0115222948896911\\
5.78388425486656	0.0149814777579181\\
5.96157909822127	0.0164501313379987\\
6.1447331547904	0.0159743296269999\\
6.33351414474162	0.0137628487315525\\
6.52809494100976	0.0100798705530983\\
6.72865372760224	0.00446567822876831\\
6.93537416276799	0.00412530365542707\\
7.14844554717933	0.0105806375808964\\
7.36806299728077	0.0123909934765335\\
7.59442762396358	0.0112901396316537\\
7.82774671672956	0.00776172877231696\\
8.06823393351286	0.00260550457549991\\
8.31610949633354	0.0029837149850036\\
8.57160039296208	0.00748040245339621\\
8.83494058477954	0.00868534638581675\\
9.10637122102363	0.00637911114484424\\
9.38614085961695	0.00373484518716738\\
9.67450569477967	0.000252827976362169\\
9.97172979163494	0.00297453254078794\\
10.278085328022	0.00465797990404615\\
10.5938528437381	0.00424249652362918\\
10.9193214974386	0.00270757999640547\\
11.2547893314283	7.04282403402035e-05\\
11.6005635445889	0.00286529906989019\\
11.9569607736911	0.00412477095521535\\
12.324307383349	0.00322829022397708\\
12.7029397648835	0.000670296116673463\\
};

\addplot [color=mycolor1, dashed]
  table[row sep=crcr]{%
1	1\\
12.7029397648835	0.00619714306898267\\
};

\addplot [color=mycolor2]
  table[row sep=crcr]{%
1	0.599287533834436\\
1.03072240652209	0.615718737696122\\
1.06238867930668	0.625576628789971\\
1.09502781619681	0.630368838475232\\
1.128669705919	0.631460698457604\\
1.1633451554534	0.629844687895003\\
1.19908591824474	0.626153577931853\\
1.23592472327997	0.620763954880197\\
1.27389530505928	0.613889403820648\\
1.31303243448788	0.605647134634993\\
1.35337195071691	0.596096834201474\\
1.39495079396242	0.585265164389532\\
1.43780703933284	0.573158783361428\\
1.48197993169554	0.559777239150209\\
1.52750992161467	0.545126220780859\\
1.57443870239304	0.529225815646022\\
1.62280924825206	0.512106543260262\\
1.67266585368467	0.493802235686815\\
1.72405417401718	0.47435013238273\\
1.77702126721744	0.453794521936262\\
1.83161563698728	0.43219046269041\\
1.88788727717902	0.409607525226252\\
1.94588771757639	0.386133141173621\\
2.00567007108211	0.3618764012901\\
2.06728908235507	0.336972668690037\\
2.13080117794186	0.311589231091301\\
2.19626451794833	0.285933022523471\\
2.26373904929878	0.260260441178061\\
2.33328656063125	0.234890660170507\\
2.40497073887949	0.210222644522346\\
2.47885722759307	0.186755685261948\\
2.5550136870494	0.16510745648675\\
2.63350985621243	0.146008590006883\\
2.71441761659491	0.130220439122574\\
2.79781105808265	0.118234921542763\\
2.88376654678106	0.109469301089443\\
2.97236279494606	0.101074167609592\\
3.06368093306352	0.089618559509851\\
3.15780458414306	0.0758936976814535\\
3.25481994029442	0.062058724446503\\
3.35481584165634	0.0490086988238533\\
3.45788385775044	0.0370372413184487\\
3.56411837133441	0.0262557523830814\\
3.67361666483139	0.0167076528536254\\
3.78647900941465	0.00840050934852199\\
3.90280875682923	0.0017466870953016\\
4.0227124340345	0.00456868879850081\\
4.14629984075437	0.0092920735624652\\
4.27368415002449	0.0128612420009603\\
4.40498201182854	0.0152223168039916\\
4.54031365991841	0.0161840148272054\\
4.67980302191621	0.0153171914899888\\
4.82357783279881	0.0120403131905603\\
4.97176975186899	0.00705617960484639\\
5.12451448332012	0.0036681470802854\\
5.281951900505	0.00165136504524011\\
5.44422617402243	0.000588909696190326\\
5.61148590373893	0.00107043846719501\\
5.78388425486656	0.00178959560623171\\
5.96157909822127	0.001975705724095\\
6.1447331547904	0.0017015794044003\\
6.33351414474162	0.0010749773188955\\
6.52809494100976	0.000325667782145267\\
6.72865372760224	0.000103357803878157\\
6.93537416276799	0.000297144165640417\\
7.14844554717933	0.000377320436228845\\
7.36806299728077	3.26261116435356e-05\\
7.59442762396358	0.000349039246157368\\
7.82774671672956	0.000557763480375173\\
8.06823393351286	0.000656579432986248\\
8.31610949633354	0.000673388037797706\\
8.57160039296208	0.000590856834280729\\
8.83494058477954	0.000241598629381778\\
9.10637122102363	0.000310203148808585\\
9.38614085961695	0.000385339392899198\\
9.67450569477967	0.000303529048530149\\
9.97172979163494	0.000128164506404294\\
10.278085328022	8.34097463241479e-05\\
10.5938528437381	0.000265336496165003\\
10.9193214974386	0.000193131808366664\\
11.2547893314283	8.84515281792675e-05\\
11.6005635445889	1.83625205062432e-05\\
11.9569607736911	2.75678211215658e-05\\
12.324307383349	1.64978264313165e-05\\
12.7029397648835	2.16776745636249e-05\\
};

\addplot [color=mycolor2, dashed]
  table[row sep=crcr]{%
1	0.999999999999999\\
12.7029397648835	3.84045822174399e-05\\
};

\end{axis}
\end{tikzpicture}%
	\ref{named}
	\caption{Modelling error for the homogenization of the multiscale coefficients \eqref{eq:tensor exp}. Modelling parameters are $k_o = 1/2$ and $k_T = \frac{k_{o}}{\pi \sqrt{4 \beta \alpha}}$.}
	\label{fig:mod_error_PoissonMS2D}
\end{figure}
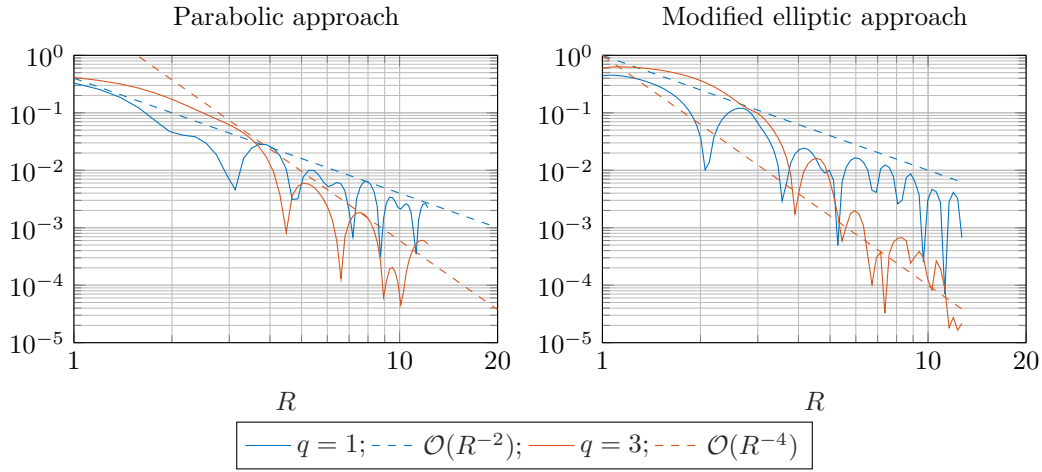

Besides the improved convergence rate, it is desirable that the computational cost of the proposed methods is comparable to the one of the classical model, that equals the cost of solving $d$ linear systems. Those can be solved by different numerical schemes, from LU decomposition, more suitable for smaller and two dimensional problems, to iterative methods like GMRES or CG, that are more indicated for large systems coming from three dimensional models. 
The first of the two proposed approaches, the parabolic model, can be efficiently solved by stabilized explicit ODE solver (such as RKC2 \cite{VHS90} or ROCK4 \cite{Abd02}), whose algorithms perform cheap matrix-vector multiplications iteratively. The number of iterations depends on the number of time steps and stages, but not on the dimension of the system. 
Lastly, the modified elliptic method has the same cost as the classical model, with the additional expense of accurately reconstructing the modified right-hand side by eigenfunction decomposition of the operator $A$. The eigenmodes computation can be done, for instance, by Krylov-Schur decomposition. 
A full analysis of the computational cost for the two methods will be addressed in future studies.

\end{document}